\newtheorem{theorem}{Theorem}[section]
\newtheorem*{theorem*}{Theorem}
\newtheorem{lemma}{Lemma}[section]
\newtheorem{proposition}{Proposition}[section]
\numberwithin{equation}{section}
\begin{document}
	\title[Morse]{Dynamics of the Morse vector field}

\author{Yijian Zhang}

\address{Yijian Zhang. School of Mathematical Sciences,  University of Science and Technology of China, Hefei, Anhui, China, 230026}

\email{zyj\_math@mail.ustc.edu.cn}

\begin{abstract}
The (negative) gradient vector fields of Morse functions on a compact manifold provide an important example in dynamical system. In this note we prove two important properties of this kind of vector field: Connectedness of critical points through orbits and exponential shrinkage of the flow on stable submanifolds. We also find applications in showing some vanishing results for maps or curvature operators.  
\end{abstract}

\maketitle

\section{Introduction}

 Morse theory is a powerful tool in differential topology that connects the critical point structure of functions on manifolds to the topology of manifolds. The key to Morse theory is the link between Morse functions and their gradient vector fields which serve as a good model in dynamical system: these vector fields have no periodic orbits and they give cell decomposition of manifolds. In this paper we discuss these points in detail.   

 A Morse function on a Riemannian manifold $M$ is a smooth function $f:M\rightarrow{\mathbb{R}}$, where all critical points are non-degenerate in the sense of Hessian of $f$. These non-degenerate critical points are isolated (hence finite by compactness of $M$) and classified by their indices $\mathrm{ind}_f$, the number of negative eigenvalues of the $\mathrm{Hess}(f)$. 
 Sticking to the convention (e.g.\cite{Jo,Lau,Sch}), we consider the negative gradient flow
 \begin{equation}
  \begin{cases}
    \Phi : M\times\mathbb{R}\rightarrow\mathbb{R} 
    \\
    \frac{\partial}{\partial t}\Phi(x,t) = -\nabla f(\Phi(x,t))
    \\
    \Phi(0,t)= \mathrm{id}_M
  \end{cases}
\end{equation}
   We denote this time-dependent diffeomorphism $\Phi(\cdot,t)$ by $\phi_t$. Since the compactness of $M$ forces the flow line away from critical points and towards critical points, the gradient flow has no periodic orbits, and induces the concept of stable and unstable manifolds:
 \begin{equation}
  \begin{aligned}
    W^u(p) &= \{x\in M \mid \lim_{t\to +\infty} \phi_t(x) = p\}, \\
    W^s(p) &= \{x\in M \mid \lim_{t\to -\infty} \phi_t(x) = p\}
  \end{aligned}
\end{equation}
,which are diffeomorphic to Euclidean spaces of dimension $\mathrm{ind}_f(p)$ and $\mathrm{dim}(M)-\mathrm{ind}_f(p)$.

  Clearly $M$ is the disjoint union of all stable (resp. unstable) manifolds. The key observation is, the stable manifolds of local minimum (i.e. $p\in \mathrm{Crit}(f)$ with $ind_f(p)=0$) cover the most of $M$, which is an open dense subset since the others have codimension at least 1. Our main results and applications rely heavily on this fact. 

  In addition, the stable manifold decomposition can be an actual cell decomposition which makes $M$ a CW-complex, if a transversality condition known as the Morse-Smale condition (cf. background in \cite{Sch}) is satisfied. Notice that this decomposition is different from the one provided by the ascending level sets (\cite{Mi}), and produces the famous Morse homology (\cite{Lau}). Although the Morse-Smale condition is generic for choices of Riemannian metrics on $M$, it is not needed in our paper.

\section{Main results}

  From now on, $M$ denotes a connected, compact Riemannian manifold. We begin with the connectedness of critical points via flow lines:

\begin{theorem}\label{connectedness}
    Any two critical points p,q of a Morse function f on M can be connected by multiple orbits of negative gradient flow. Precisely, there exists a sequence of critical points $p_1,\cdots,p_n$ with $p_1=p$, $p_n=q$, such that either $p_i$ flows to $p_{i+1}$, or $p_{i+1}$ flows to $p_i$.
\end{theorem}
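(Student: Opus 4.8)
\emph{Proof plan.} The plan is to encode the connecting orbits into a graph and then show that graph is connected by a clopen-sets argument on $M$. First the setup: on the compact manifold $M$ one has $\tfrac{d}{dt}f(\phi_t(x))=-|\nabla f(\phi_t(x))|^2\le 0$, so $f$ decreases strictly along non-constant orbits, and the $\alpha$- and $\omega$-limit sets of any $x$ are nonempty, connected, flow-invariant subsets of $\mathrm{Crit}(f)$; since $f$ is Morse, $\mathrm{Crit}(f)$ is finite, so these limit sets are single critical points $\alpha(x),\omega(x)$, with $f(\alpha(x))>f(\omega(x))$ whenever $x\notin\mathrm{Crit}(f)$. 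Thus every point lies on the orbit running from $\alpha(x)$ down to $\omega(x)$. Let $G$ be the graph on the vertex set $\mathrm{Crit}(f)$ in which $p,q$ are adjacent exactly when some non-constant orbit runs between them; the theorem is the assertion that $G$ is connected.

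For each connected component $\mathcal C$ of $G$, set $N_{\mathcal C}=\{x\in M:\omega(x)\in\mathcal C\}$; since the orbit through $x$ joins $\alpha(x)$ to $\omega(x)$, one also has $N_{\mathcal C}=\{x:\alpha(x)\in\mathcal C\}$, and the finitely many $N_{\mathcal C}$ are pairwise disjoint and cover $M$. So it is enough to show each $N_{\mathcal C}$ is closed: it will then also be open (the complement of the union of the others), hence clopen, and connectedness of $M$ forces $G$ to have one component. Writing $W(q)=\{x:\omega(x)=q\}$ for the forward basin of $q$ (open iff $q$ is a local minimum), closedness of $N_{\mathcal C}$ reduces --- after passing to a subsequence among the finitely many critical points --- to the crux of the argument:

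\textbf{Key Claim.} Every critical point $r$ in the closure $\overline{W(q)}$ lies in the same component of $G$ as $q$.

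I would prove this by well-founded recursion on the (finitely many) values of $f$ on $\mathrm{Crit}(f)\cap\overline{W(q)}$, using that $\overline{W(q)}$ is closed and flow-invariant with $f\ge f(q)$ on it, and that $W(r)\cap W(q)=\varnothing$ for $r\ne q$. If $r=q$ there is nothing to do. If $r\ne q$ then $\mathrm{ind}_f(r)\ge1$ --- otherwise $W(r)$ is an open neighbourhood of $r\in\overline{W(q)}$ and so meets $W(q)$, impossible. It then suffices to produce a critical point $r'\in\overline{W(q)}$ with $f(q)\le f(r')<f(r)$ and a non-constant orbit from $r$ to $r'$: this makes $r$ adjacent to $r'$, and $r'$ lies in $q$'s component by the recursion (its $f$-value is smaller, still $\ge f(q)$), which terminates since $\mathrm{Crit}(f)$ is finite.

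Producing $r'$ is the step I expect to be the real obstacle, being the only point where one needs fine information about the flow rather than soft topology. Choose $y_n\to r$ with $y_n\in W(q)$. The local stable manifold of $r$ is contained in $W(r)$, which is disjoint from $W(q)$, so for large $n$ the point $y_n$ misses the local stable manifold of $r$; by the hyperbolicity of the rest point $r$ (non-degeneracy of the critical point, via Hartman--Grobman or the $\lambda$-lemma, cf.\ \cite{Sch}) the forward orbit of $y_n$ leaves a fixed small ball about $r$ at a time $t_n\to\infty$, its stable component at the exit time tending to $0$; hence a subsequence of $\phi_{t_n}(y_n)$ converges to a point $z_\ast\ne r$ on the local unstable manifold of $r$. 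Since each $\phi_{t_n}(y_n)\in W(q)$ and $\overline{W(q)}$ is closed and flow-invariant, $z_\ast\in\overline{W(q)}$ and therefore $r'=\omega(z_\ast)\in\overline{W(q)}$; the orbit through $z_\ast$ goes from $r$ to $r'$, so $f(r')<f(r)$ and $r$ is adjacent to $r'$. This uses compactness of $M$ (for the limits $\alpha,\omega$ and for the exit-point subsequence) and non-degeneracy of the critical points, but not the Morse--Smale transversality condition. As a variant, one may first note that each critical point is joined to a local minimum --- descend along a non-constant orbit in its unstable manifold, a process that must end at a minimum since $f$ strictly decreases among finitely many critical values --- and then see that all local minima are joined: were they not, the closures of the open basins of the two families of minima would be disjoint closed sets whose union contains the dense open union of all basins of minima, disconnecting $M$, whereas the Key Claim shows two minima with intersecting basin-closures are joined.
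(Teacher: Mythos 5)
Your argument is correct, and its global skeleton coincides with the paper's: you partition $M$ into finitely many closed, flow-saturated sets indexed by connectivity classes of critical points and invoke connectedness of $M$, exactly as the paper does with the sets $\bigcup_{i\in E_j}\overline{W^s(q_i)}$; your Key Claim is the paper's Lemma~2.2 in mild disguise, and your closing ``variant'' is literally the paper's proof. The one genuine divergence is how you produce the descending orbit out of a boundary critical point $r\in\overline{W(q)}$. You appeal to hyperbolicity of the rest point (Hartman--Grobman / the $\lambda$-lemma) to show that exit points of forward orbits of $y_n\to r$ accumulate on $W^u_{\mathrm{loc}}(r)\setminus\{r\}$; this is a correct standard fact and it does close the argument, but as you anticipate it is the crux, and you cite it rather than prove it. The paper instead gets the same conclusion with bare hands from the Morse function itself: away from the critical balls one has $f(\phi_a(x))-f(\phi_b(x))\ge c\cdot\mathrm{Length}(\phi_t(x):t\in[a,b])$, so by flowing $x_i\to r$ forward until $f$ drops by a small $\epsilon$ (chosen below the gap to the next critical value) the trajectory is trapped in $B(r,\rho)$, the exit times tend to $\infty$, and a limit $y$ of the exit points satisfies $\lim_{t\to-\infty}\phi_t(y)=r$ directly from the trapping estimate --- no stable/unstable manifold theory needed. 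So the trade-off is: your route outsources the hard local analysis to a standard theorem of hyperbolic dynamics (and would generalize beyond gradient flows), while the paper's is self-contained and elementary, exploiting that the vector field is a gradient. If you keep your version, you should either quote a precise statement of the inclination lemma that covers sequences of points (rather than transverse disks) or supply the short linearization estimate; everything else in your write-up is complete.
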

\begin{proof}
    Let $q_1,\cdots,q_N$ be all local minima of f, we already know that $\coprod_{i=1}^N W^s(q_i)$ is open dense in M, and $\forall x\in W^s(q_i)$ is contained in a flow line towards $q_i$. Consider the closure $\overline{W^s(q_i)}$, we have $\cup_{i=1}^N\overline{W^s(q_i)}=\overline{\cup_{i=1}^N W^s(q_i)}=M$. We propose the following lemma:
\begin{lemma}\label{flow}
    Every point in $\overline{W^s(q_i)}$ lies in a piecewise flow line towards $q_i$.
\end{lemma}
    With lemma 2.1, it's easy to see that if $\overline{W^s(q_i)}\cap \overline{W^s(q_j)}\neq \emptyset$, then $q_i$ and $q_j$ is connected. Now we divide these N minima into k maximal parts: $\{q_1,\cdots,q_N\}=\cup_{j=1}^k E_j$, such that the minima in the same $E_j$ can be connected to each other. Notice that for $j\neq k$, the intersection of $\cup_{i\in E_j}\overline{W^s(q_i)}$ and $\cup_{i\in E_k}\overline{W^s(q_i)}$ must be empty, otherwise minima in $E_j$ and $E_k$ are connectable, which contradicts the maximality of $E_j$. Thus $M$ is the disjoint union of closed subsets $\cup_{i\in E_j}\overline{W^s(q_i)}$, $j=1,\cdots,k$. Since $M$ is connected, $k=1$ and we obtain the connectedness of all local minima, hence connectedness of all critical points, as all points flow down to a minimum. \end{proof}

\begin{proof}[Proof of {Lemma}~\ref{flow}]
    $\forall p\in \overline{W^s(q_1)}$, there exists a sequence $\{x_i \}_{i=1}^\infty$ in ${W^s(q_1)}$ that converges to $p$. By continuity of $\phi_t$, $\phi_t(x_i)$ in converges to $\phi_t(p)$, and by $\phi_t(x_i)\in W^s(q_1)$ we get $\phi_t(p)\in \overline{W^s(q_1)}$. If $p$ isn't critical, then $\phi_t(p)$ naturally flows down to some critical point which is still contained in $\overline{W^s(q_1)}$. Thus it remains to show that if $p$ is a critical point, there exists a flow line starting from $p$ which lies in $\overline{W^s(q_1)}$.

    Firstly, we fix small $r$ such that the distance between any two critical points with respect to the given metric is at least $2r$, then $|\nabla f|$ has a positive lower bound $c$ outside $\cup\{B(q,\frac{r}{2})|q\in \mathrm{Crit}(f)\}$. Let $\epsilon$, $\delta$ be small constants to be determined. The key is that outside $\cup\{B(q,\frac{r}{2})|q\in \mathrm{Crit}(f)\}$, the difference of Morse function $f$ controls the length of the flow line:
\begin{equation*}
    \begin{aligned}
        f(\phi_{a}(x))-f(\phi_{b}(x))=-\int_{a}^{b}\frac{\mathrm{d}}{\mathrm{d}t}f(\phi_t(x))dt \ \ \ \ 
    \\    = \int_{a}^{b}|\nabla f(\phi_t(x))|^2dt  
      \geq c\int_{a}^{b}|\nabla f(\phi_t(x))|dt  \ \ \ \ \ 
    \\ = c\int_{a}^{b}|\frac{\mathrm{d}}{\mathrm{d}t}\phi_t(x)|dt = c \cdot \text{Length}(\phi_t(x):t\in [a, b])
    \end{aligned}
\end{equation*}
    
    Thus we choose $\epsilon<\frac{cr}{4}$ such that $[f(p)-\epsilon,f(p))$ doesn't contain critical value of $f$ and $\delta<\frac{r}{2}$ such that $\forall x\in B(p,\delta)$, $|f(x)-f(p)|<\epsilon$. Assume $x_i\in B(p,\delta)$ without loss of generality. We assert that for time $t_i>0$ with $f(\phi_{t_i}(x_i))=f(p)-\epsilon$, the flow line segment $\phi_s(x_i)$, $s\in [0,t_i]$ must lie in $B(p,r)$. If $\phi_s(x_i)\notin B(p,r)$, by selecting biggest time $\tilde{t_i}>0$ such that $\phi_{\tilde{t_i}}(x_i)\in \partial B(q,\frac{r}{2})$, we have
\begin{equation}\label{ineq}
    \begin{aligned}
         2\epsilon> f(x_i)-f(\phi_{t_i}(x_i))> f(\phi_{\tilde{t_i}}(x_i))-f(\phi_{s}(x_i)) 
      \\   \geq c \cdot \text{Length}(\phi_t(x_i):t\in [\tilde{t_i}, s]), \ \ \ \ \ \ \ \ \ \ \   
    \end{aligned}
\end{equation}
if $\phi_{t_i}(x_i)\notin B(p,r)$, then $\text{Length}(\phi_t(x_i):t\in [\tilde{t_i}, t_i])\geq \frac{r}{2}$, which contradicts the choice of $\epsilon$. 

  Another important fact is that $t_i\rightarrow +\infty$ as $i\rightarrow +\infty$. If not, after passing to a subsequence, we may assume that $t_i\leq C$. However, $\phi_C(x_i)$ also converges to $\phi_C(p)=p$, hence 
\begin{equation*}
    f(p)=\lim_{i\to +\infty}f(\phi_C(x_i))\leq \lim_{i\to +\infty}f(\phi_{t_i}(x_i))=f(p)-\epsilon,
\end{equation*}
a contradiction.

  The compactness of $M$ now guarantees a limit $y\in f^{-1}(f(p)-\epsilon)\cap \overline{W^s(q_1)}$ for a subsequence of $\{\phi_{t_i}(x_i)\}$, still denoted by $\{\phi_{t_i}(x_i)\}$. Suppose $\lim_{t\to -\infty} \phi_t(y)=p'$, then there exists $T>0$ such that the distance $d(\phi_{-T}(y),p')<\frac{r}{2}$. Since $\phi_{t_i-T}(x_i)\to \phi_{-T}(y)$, for large enough $i$, we have $d(\phi_{t_i-T}(x_i),p')<r$. However, for $t_i\geq T$, we also have $d(\phi_{t_i-T}(x_i),p)<r$ in view of the previous assertion. Therefore $d(p,p')<2r$, which implies $p'=p$ by the choice of $r$. Recall that $y\in f^{-1}(f(p)-\epsilon)$ isn't critical, thus we obtain the desired flow line $\phi_t(y)$ which starts from $p$   and lies in $\overline{W^s(q_1)}$.
    \end{proof}\vspace{-1\baselineskip}

  Next, we show the exponential shrinkage of the negative gradient flow on stable manifolds. Though it's a classical result in the literature which can be generalized to hyperbolic systems (see Section 8.3 of \cite{Jo}), we present here a direct, coordinate-free proof via pull-back bundle computation:

\begin{theorem}\label{shrinkage}
    For every $x\in W^s(p)$ and $v_x\in T_xW^s(p)$, there exists $c>0$ such that $\big| \mathrm{d}\phi_t|_x(v_x)\big|=O(e^{-ct})$ as $t \to +\infty$.
\end{theorem}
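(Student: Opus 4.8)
The plan is to work near the critical point $p$ and transport the problem to the tangent space $T_pM$, where the negative gradient flow of a Morse function is, to leading order, the linear flow generated by $-\mathrm{Hess}_p(f)$. Fix geodesic normal coordinates centered at $p$; in these coordinates $\nabla f(z) = A z + O(|z|^2)$ where $A = \mathrm{Hess}_p(f)$ is symmetric, and the stable subspace $E^s = T_pW^s(p)$ is the span of the positive eigenvectors of $A$, with smallest positive eigenvalue $\lambda>0$. The first step is therefore the standard observation that for $x\in W^s(p)$ the orbit $\phi_t(x)$ enters and stays in a small ball $B(p,\rho)$ for all $t\geq T_0$, and in that ball the flow is a $C^1$-small perturbation of the linear flow $e^{-tA}$.

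Next I would set up the variational (linearized) equation along the orbit. Writing $\gamma(t)=\phi_t(x)$ and $J(t)=\mathrm{d}\phi_t|_x(v_x)$, the vector $J(t)$ satisfies the linear ODE $\dot J(t) = -\big(\mathrm{Hess}_{\gamma(t)}f\big) J(t)$ along $\gamma$ (this is the pull-back-bundle computation the author advertises: differentiate $\dot\gamma = -\nabla f(\gamma)$ in the $x$-direction using the Levi-Civita connection on $\gamma^*TM$, and use that the Hessian is the covariant derivative of the gradient). Because $\gamma(t)\to p$, the coefficient operator $\mathrm{Hess}_{\gamma(t)}f \to A$ as $t\to\infty$. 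So we are in the classical situation of a linear ODE whose coefficient matrix converges to a fixed symmetric matrix $A$ with a positive spectral gap, and the claim is exponential decay of the solution $J(t)$, which a priori lives in all of $T_pM$ but is constrained: since $v_x\in T_xW^s(p)$ and $W^s(p)$ is $\phi_t$-invariant, $J(t)\in T_{\gamma(t)}W^s(p)$, the stable distribution, which converges to $E^s$.

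For the decay estimate itself I would use a Lyapunov/energy argument rather than Gronwall on the norm directly, to exploit the sign of $A$ on the stable subspace. Consider $\psi(t) = \langle J(t),J(t)\rangle$ (inner product in the trivialized bundle $\gamma^*TM$, or equivalently $|J(t)|^2$ with the Riemannian metric). Then $\dot\psi = -2\langle \mathrm{Hess}_{\gamma(t)}f\, J,J\rangle$. Since $J(t)$ stays exponentially close to the stable subspace on which $A\geq \lambda\,\mathrm{Id}$, and since $\mathrm{Hess}_{\gamma(t)}f$ is within $\epsilon$ of $A$ once $t\geq T_1(\epsilon)$, one gets $\dot\psi \leq -2(\lambda-\epsilon')\psi$ for $t\geq T_1$, hence $\psi(t)\leq \psi(T_1)e^{-2(\lambda-\epsilon')(t-T_1)}$, i.e. $|J(t)| = O(e^{-ct})$ with any $c<\lambda$. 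The one point needing care is controlling the component of $J(t)$ transverse to $E^s$: this is where I would use invariance of $W^s(p)$ to conclude $J(t)$ lies in the stable distribution (so no unstable/zero eigendirections appear), together with the fact that the stable distribution is continuous and equals $E^s$ at $p$, to make the "$\epsilon'$" term honest.

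The main obstacle I anticipate is precisely making the perturbation bookkeeping rigorous in a coordinate-free way: one must quantify how close $\mathrm{Hess}_{\gamma(t)}f$ is to $\mathrm{Hess}_p f$ (this needs a rate, which follows from $\gamma(t)\to p$ exponentially — but that exponential rate is itself essentially what we are proving, so one should first extract a crude decay of $|\gamma(t)-p|$, e.g. $o(1)$ suffices, bootstrap to get the Hessian close, then run the Lyapunov argument, and if desired feed the resulting exponential decay of $J$ back to sharpen the orbit decay). A clean way around the circularity is to note we only need $\mathrm{Hess}_{\gamma(t)}f\to A$ in operator norm, which is immediate from continuity of the Hessian and $\gamma(t)\to p$, with no rate required: the Lyapunov inequality $\dot\psi\leq -2(\lambda-\epsilon')\psi$ then holds for all sufficiently large $t$, which is all that is needed for the $O(e^{-ct})$ conclusion. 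A secondary technical point is the identification of $T_pW^s(p)$ with the stable subspace $E^s$ of $A$, which is part of the stable manifold theorem and may be quoted from the references cited in the paper.
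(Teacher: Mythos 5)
Your proposal is correct and follows essentially the same route as the paper: both differentiate $\psi(t)=|\mathrm{d}\phi_t|_x(v_x)|^2$ along the orbit to obtain $\dot\psi=-2\,\mathrm{Hess}_{\phi_t(x)}(f)\big(\mathrm{d}\phi_t|_x(v_x),\mathrm{d}\phi_t|_x(v_x)\big)$, then use invariance of $W^s(p)$ to place $\mathrm{d}\phi_t|_x(v_x)$ in the stable distribution, whose convergence to $T_pW^s(p)$ together with continuity of the Hessian yields $\dot\psi\le -c\psi$ for large $t$ and hence exponential decay. The only cosmetic difference is that the paper carries out this computation coordinate-free in the pull-back bundle $\Phi^{-1}(TM)$ rather than in normal coordinates, and it does not need the quantitative linearization $e^{-tA}$ scaffolding you set up.
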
\vspace{-2\baselineskip}

\begin{proof}[Preparation for the proof]
    In order to take covariant derivatives of the cross section $\mathrm{d}\phi_t(v)=\mathrm{d}\Phi|_{(\cdot~, t)}(v,0)$, we introduce the pull-back bundle $\Phi^{-1}(TM)$ of $TM$ and the pull-back connection $\tilde\nabla$ of the Levi-Civita connection $\nabla$ on $TM$. The bundle $\Phi^{-1}(TM) \to M$ admits a natural definition for which there exists a $C^{\infty}$-linear morphism $\psi:\Phi^{-1}(TM) \to TM$ such that the diagram 
\[
    \begin{tikzcd}[row sep=3em, column sep=3em]
\Phi^{-1}(TM)  \arrow[r, "\psi"] \arrow[d,"\tilde\pi"'] & TM \arrow[d,"\pi"] \\
M\times \mathbb{R} \arrow[r,"\Phi"] & M \\
    \end{tikzcd}\vspace{-2.5\baselineskip}
\]  
commutes and $\psi_{(x,t)}:\Phi^{-1}(TM)_{(x,t)} \to TM_{\Phi(x,t)}$ is an isomorphism for every $(x,t)\in M\times \mathbb{R}$. 
  
  Given any section $s\in \Gamma(M,TM)$, the pull-back section $\Phi^*s\in \Gamma(M\times \mathbb{R}, \Phi^{-1}(TM))$ is defined by $\Phi^*s_{(x,t)}=\psi^{-1}_{(x,t)}s_{\Phi{(x,t)}}$. In this way, the pull-back connection $\tilde\nabla$ can be given by $\tilde\nabla_X\Phi^*s=\Phi^*(\nabla_{\mathrm{d}\Phi(X)}s)$. Note that $\tilde\nabla$ preserves the pull-back metric $\Phi^* g$, which is induced by $g$ via the isomorphism $\psi$. 
    
  Now $\mathrm{d}\Phi|_{(x, t)}(v,0)$ can be viewed as a section of $\Phi^{-1}(TM)$, and we could carry out calculation which is the same as computing the first variation in harmonic maps. For details on the latter, we refer to \cite{Ura}.
  \end{proof}
\begin{proof}
  Let $e(x,t)=\frac{1}{2}\big| \mathrm{d}\phi_t|_x(v_x)\big|^2_{\Phi^* g}=\frac{1}{2}\big|\mathrm{d}\Phi|_{(x, t)}(v,0)\big|_{\Phi^* g}^2\in C^{\infty}(M\times \mathbb{R},\mathbb{R})$, and $E(t)=e(x,t)$ for fixed $x$. By direct calculation
\begin{equation}\label{ODE}
 \begin{aligned}
   \frac{\mathrm{d}}{\mathrm{d}t}E(t)= &(0,\frac{\partial}{\partial t})e(x,t)  \\ =&\frac{1}{2}(0,\frac{\partial}{\partial t}) \big\langle\mathrm{d}\Phi|_{(x, t)}(v,0),\mathrm{d}\Phi|_{(x, t)}(v,0)\big\rangle_{\Phi^* g}  
   \\  =&\big\langle \tilde\nabla_{(0,\frac{\partial}{\partial t})} \mathrm{d}\Phi(v,0)|_{(x, t)},\mathrm{d}\Phi|_{(x, t)}(v,0)\big\rangle_{\Phi^* g}  \\ \textbf{=}& \big\langle \tilde\nabla_{(v,0)} \mathrm{d}\Phi(0,\frac{\partial}{\partial t})|_{(x, t)},\mathrm{d}\Phi|_{(x, t)}(v,0)\big\rangle_{\Phi^* g}
  \\  =&\big\langle \tilde\nabla_{(v,0)} \Phi^*(-\nabla f)|_{(x, t)},\mathrm{d}\Phi|_{(x, t)}(v,0)\big\rangle_{\Phi^* g}  
  \\ =& -\big\langle \nabla_{\mathrm{d}\Phi|_{(x, t)}{(v,0)}} \nabla f,\mathrm{d}\Phi|_{(x, t)}(v,0)\big\rangle_g
   \\  =& -\text{Hess}(f)|_{\phi_t(x)}(\mathrm{d}\phi_t|_x(v_x),\mathrm{d}\phi_t|_x(v_x))
 \end{aligned}
\end{equation}
, where the fourth equality follows from the torsion-freeness of $\nabla$:
\begin{equation*}
    (\mathrm{d}_{\tilde\nabla} \mathrm{d}\Phi)(X,Y)= \tilde\nabla_X \mathrm{d}\Phi(Y)-\tilde\nabla_Y \mathrm{d}\Phi(X)-\mathrm{d}\Phi([X,Y])=0.
\end{equation*}
Its proof can be found on p. 129 of \cite{Ura}.

 Notice that $\mathrm{d}\phi_t|_x(v_x)\in T_{\phi_t(x)}W^s(p)$ since $v_x\in T_xW^s(p)$ - a consequence of the invariance of $W^s(p)$ under the flow $\phi_t$, and $T_{\phi_t(x)}W^s(p)$ converges to $T_pW^s(p)$ as $t \to +\infty$. Moreover $\text{Hess}(f)|_p$ is positive definite on $T_pW^s(p)$
 , i.e. $\exists ~c>0 ~s.t. ~\text{Hess}(f)|_p(w,w)\geq c|w|^2$ for $\forall w\in T_pW^s(p)$. Thus for suffciently large $t$, $\text{Hess}(f)|_{\phi_t(x)}(\mathrm{d}\phi_t|_x(v_x),\mathrm{d}\phi_t|_x(v_x))  \geq \frac{c}{2}|\mathrm{d}\phi_t|_x(v_x)|^2=cE(t)$. It's basically a linear-algebraic fact: If a quadratic form $B$ is positive definite on a subspace $V$, then small perturbations of $B$ and $V$ still preserve the positivity.

  Now (\ref{ODE}) becomes $\frac{\mathrm{d}}{\mathrm{d}t}E(t) \leq -cE(t), ~\forall t \geq t_0$ for some $t_0$. Integration immediately yields the exponential decay estimate: $E(t)\leq E(t_0)e^{-c(t-t_0)}$.
\end{proof}
\textbf{Remark:} Combining {Theorem}~\ref{shrinkage} with the fact that the union of stable manifolds of all local minima is open dense in $M$, and for each such minimum $q_i$,  $T_xW^s(q_i)=T_xM$, it follows that the flow is shrinking almost everywhere on $M$. It will be used in {Proposition}~3.2.

\section{Applications}
 
   Both {Theorem}~\ref{connectedness} and {Theorem}~\ref{shrinkage} find applications in certain vanishing results. These results generalize the {Proposition}~3.2 and {Proposition}~3.4 in Ni's paper \cite{Ni}, where the Morse function is given by a linear function $l$ of $\mathbb{R}^{m+1}$ restricted to the unit sphere $\Bbb{S}^m$.
   
\begin{proposition}\label{constant}
   If for a smooth map $u:M\to N$ and for some Morse function $f$ on $M$, $du(\nabla f)=0$, then $u$ must be a constant map. 
\end{proposition}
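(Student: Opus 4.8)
The plan is to turn the hypothesis $\mathrm{d}u(\nabla f)=0$ into the statement that $u$ is constant along every negative gradient orbit, and then to spread this constancy over all of $M$ using the connectedness of critical points (Theorem~\ref{connectedness}) together with the density of the stable manifolds of the local minima.

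First I would record the one-line computation driving everything. For every $x\in M$,
\[
\frac{\mathrm{d}}{\mathrm{d}t}\,u\big(\phi_t(x)\big)=\mathrm{d}u|_{\phi_t(x)}\!\left(\frac{\partial}{\partial t}\phi_t(x)\right)=-\,\mathrm{d}u|_{\phi_t(x)}\big(\nabla f(\phi_t(x))\big)=0,
\]
so $t\mapsto u(\phi_t(x))$ is constant; hence $u$ is constant on each orbit, and by continuity of $u$ it is constant on the closure of each orbit. In particular, if a critical point $p$ flows to a critical point $q$, then $u(p)=u(q)$, since the connecting orbit has $p$ and $q$ as its limits for $t\to-\infty$ and $t\to+\infty$ and $u$ is constant along it.

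Next I would invoke Theorem~\ref{connectedness}: any two critical points $p,q$ are joined by a finite chain $p=p_1,\dots,p_n=q$ in which consecutive points are connected by an orbit (in one direction or the other), so the previous step gives $u(p_1)=\cdots=u(p_n)$, and therefore $u$ takes one and the same value, say $a\in N$, on the entire finite set $\mathrm{Crit}(f)$. Let $q_1,\dots,q_N$ be the local minima. For any $x\in W^s(q_i)$ the orbit through $x$ converges to $q_i$ as $t\to+\infty$, whence $u(x)=\lim_{t\to+\infty}u(\phi_t(x))=u(q_i)=a$; thus $u\equiv a$ on $\coprod_{i}W^s(q_i)$, which is dense in $M$. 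Since $u$ is continuous, $u\equiv a$ on all of $M$, i.e.\ $u$ is a constant map.

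The argument is soft, so I do not expect a genuine obstacle. The only point needing care is the passage from ``$u$ constant along an orbit'' to ``$u$ agrees at the two limiting critical points'': this uses continuity of $u$ and the fact --- implicit in Theorem~\ref{connectedness} and its proof via Lemma~\ref{flow} --- that the connecting orbits really do limit onto critical points at both ends. Everything else is the density of $\coprod_i W^s(q_i)$ already established in Section~2.
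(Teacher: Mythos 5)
Your proposal is correct and is essentially the paper's own argument, just written out in full: the paper simply notes that $u$ is constant along flow lines and cites Theorem~\ref{connectedness}, exactly the two ingredients you use. (Minor remark: you could skip the density step, since every point of $M$ already lies in $W^s(p)$ for some critical point $p$, giving $u(x)=u(p)=a$ directly.)
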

\begin{proof}
    It's a direct consequence of {Theorem}~\ref{connectedness}, since $u$ is constant along the flow lines of negative gradient flow.
\end{proof}

  Another application is the flatness of metric connections on vector bundles. First we introduce the concept of connection Lie derivative: Let $E\to M$ be a vector bundle and let $D$ be a connection on $E$. For a vector field $X$ on $M$ and its associated flow $\phi_t$, we denote by $\tilde\phi_t$ the horizontal lift of $\phi_t$ with respect to the connection $D$, i.e. $\tilde\phi_t$ is the parallel transport along $\phi_t$. The connection Lie derivative on $\omega\in\Omega^r(M,E)$ is defined by
 \begin{equation}
      \mathcal{L}_X^D\omega(Y_1,\cdots,Y_r)=\frac{\mathrm{d}}{\mathrm{d}t}\Bigg |_{t=0}\tilde\phi_{-t}\Big( \omega \big(\mathrm{d}\phi_t(Y_1),\cdots, \mathrm{d}\phi_t(Y_r)   \big) \Big)
  \end{equation}
for $\forall ~Y_1,\cdots, Y_r \in \Gamma(M,TM)$. It has a formula analogous to the normal Lie derivative: 
\begin{equation*}
    \mathcal{L}_X^D\omega(Y_1,\cdots,Y_r)=D_X\big(\omega(Y_1,\cdots,Y_r) \big)-\displaystyle\sum_{i=1}^r\omega(Y_1,\cdots,[X,Y_i],\cdots,Y_r)
\end{equation*}
\begin{proof}
    It's just an argument of Leibniz rule :
\begin{equation*}
    \begin{aligned}
        \mathcal{L}_X^D\omega(Y_1,\cdots,Y_r)(x)=  \lim_{t\to 0} \frac{1}{t} \Big( \tilde\phi_{-t}\big( \omega_{\phi_t(x)} \big(\mathrm{d}\phi_t(Y_1 \big |_x),\cdots, \mathrm{d}\phi_t(Y_r \big |_x)  \big) \big)- \omega_x \big(Y_1 \big |_x,\cdots, Y_r \big |_x) \Big)
    \\ =\lim_{t\to 0} \frac{1}{t} \Big( \tilde\phi_{-t} \big( \omega_{\phi_t(x)}(Y_1\big|_{\phi_t(x)}, \cdots, Y_r\big|_{\phi_t(x)}) \big) - \omega_x \big(Y_1 \big |_x,\cdots, Y_r \big |_x \big) \Big)+ \ \ \ \ \ \ \ \ \ \ \ \ \ \ \ \ \ 
    \\ \sum_{i=1}^r \lim_{t\to 0}  \tilde\phi_{-t}\Big( \omega_{\phi_t(x)} \big( Y_1\big|_{\phi_t(x)} , \cdots,  \mathrm{d}\phi_t(\frac{Y_i|_x-\mathrm{d}\phi_{-t}(Y_i|_{\phi_t(x)})}{t}) ,\cdots, \mathrm{d}\phi_t(Y_r \big |_x) \big) \Big) \ \ \ \ \ \ \ \ \ \ \ \ 
    \\ = D_X\big(\omega(Y_1,\cdots,Y_r) \big)-\displaystyle\sum_{i=1}^r\omega \big(Y_1,\cdots,\mathcal{L}_X(Y_i),\cdots,Y_r \big). \ \ \ \ \ \ \ \ \ \ \ \ \ \ \ \ \ \ \ \ \ \ \ \ \ 
    \end{aligned}
\end{equation*}Notice that it induces the normal formula by considering the trivial connection on the trivial line bundle over $M$.\end{proof}

Recall the induced exterior derivative $d_D:\Omega^{\bullet}(M,E)\to \Omega^{\bullet +1}(M,E)$ of the connection $D$ (see \cite{Ni} for more details):
\begin{equation*}
    \begin{aligned}
        d_D\omega \big(Y_1, \cdots,Y_r \big)=\sum_{i=1}^r (-1)^{i-1}D_{Y_i} \big(\omega(Y_1,\cdots,\hat{Y_i}, \cdots,Y_r) \big)
        \\ +\sum_{1\leq i,j\leq r} (-1)^{i+j} \omega \big([Y_i,Y_j],Y_1,\cdots, \hat{Y_i},\cdots,\hat{Y_j},\cdots,Y_r \big), \ \ \ 
    \end{aligned}
\end{equation*}
we obtain a similar Cartan formula: $\mathcal{L}^D_X=\imath_Xd_D+d_D\imath_X$, which is sufficient to verify for $0$- and $1$-forms, since $\mathcal L_X^D$ is a derivation (of degree 0):
\begin{equation*}
    \begin{aligned}&(\imath_Xd_D+d_D\imath_X)f=d_Df(X)=D_Xf=\mathcal{L}_X^Df; 
    \\ &(\imath_Xd_D+d_D\imath_X)\omega(Y)= d_D\omega(X,Y)+\big(d_D\omega(X)\big)Y 
    \\ &=D_X\omega(Y)-D_Y\omega(X)-\omega([X,Y])+D_Y\omega(X)=\mathcal{L}_X^D\omega(Y).
    \end{aligned}
\end{equation*}

  One final point to mention is the endomorphism bundle $\text{End}(E)$, together with its connection $\tilde D=[d_D,\cdot]$ induced by the connection $D$ on $E$. Note that the curvature operator $R^D\in \Omega^2(M,\text{End}(E))$ and we have the well-known Bianchi identity $d_{\tilde D}R^D=[d_D,d_D^2]=0$.

\begin{proposition}\label{flatness}
    Suppose $E\to M$ is a Riemannian vector bundle endowed with a metric $h$, and $D$ is a connection on $E$ compatible with $h$. If for a Morse function $f$, $\mathcal{L}^{\tilde D}_{\nabla f}R^D=0$, then $R^D=0$, \textnormal{i.e.} $D$ is flat.
\end{proposition}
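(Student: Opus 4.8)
The plan is to promote the hypothesis $\mathcal{L}^{\tilde D}_{\nabla f}R^D=0$ to a flow-invariance property of the curvature, and then to kill $R^D$ on a dense subset of $M$ with Theorem~\ref{shrinkage}. Since the connection Lie derivative is linear in the vector field, $\mathcal{L}^{\tilde D}_{\nabla f}R^D=0$ is equivalent to $\mathcal{L}^{\tilde D}_{-\nabla f}R^D=0$, and the flow of $-\nabla f$ is exactly the negative gradient flow $\phi_t$ of the previous sections. Fix $x\in M$ and $v,w\in T_xM$, and consider the curve
\[
  \Psi(t)=\tilde\phi_{-t}\big(R^D_{\phi_t(x)}(\mathrm{d}\phi_t(v),\mathrm{d}\phi_t(w))\big)
\]
in the fiber $\mathrm{End}(E_x)$, where $\tilde\phi_t$ is parallel transport along the flow line with respect to $D$. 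Using the flow law $\phi_{t+s}=\phi_s\circ\phi_t$, the corresponding composition law for parallel transport, and the $C^\infty$-linearity of parallel transport, a direct computation gives $\Psi'(t)=\tilde\phi_{-t}\big((\mathcal{L}^{\tilde D}_{-\nabla f}R^D)_{\phi_t(x)}(\mathrm{d}\phi_t(v),\mathrm{d}\phi_t(w))\big)=0$, so $\Psi$ is constant; that is, $R^D_{\phi_t(x)}(\mathrm{d}\phi_t(v),\mathrm{d}\phi_t(w))=\tilde\phi_t\big(R^D_x(v,w)\big)$ for all $t$.

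Next I would take norms. Because $D$ is compatible with $h$, each parallel transport $\tilde\phi_t$ is a linear isometry between fibers of $E$, hence the induced parallel transport on $\mathrm{End}(E)$ --- conjugation by $\tilde\phi_t$ --- is an isometry for the Hilbert--Schmidt norm on each fiber $\mathrm{End}(E_x)$. Therefore $\big|R^D_{\phi_t(x)}(\mathrm{d}\phi_t(v),\mathrm{d}\phi_t(w))\big|=\big|R^D_x(v,w)\big|$ does not depend on $t$. On the other hand $R^D$ is a smooth $\mathrm{End}(E)$-valued $2$-form on the compact manifold $M$, so it is bounded: there is $C>0$ with $|R^D_y(a,b)|\le C\,|a|\,|b|$ for all $y\in M$ and all $a,b\in T_yM$. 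Combining the two, $|R^D_x(v,w)|\le C\,|\mathrm{d}\phi_t(v)|\,|\mathrm{d}\phi_t(w)|$ for every $t$. Now choose $x$ in the stable manifold $W^s(q_i)$ of a local minimum $q_i$; then $T_xW^s(q_i)=T_xM$, so Theorem~\ref{shrinkage} gives $|\mathrm{d}\phi_t(v)|,|\mathrm{d}\phi_t(w)|=O(e^{-ct})\to0$ as $t\to+\infty$, and the inequality forces $R^D_x(v,w)=0$. Since $v,w\in T_xM$ are arbitrary, $R^D$ vanishes on $\bigcup_i W^s(q_i)$, which is dense in $M$; by continuity $R^D\equiv0$, i.e.\ $D$ is flat.

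The step I expect to be the main obstacle is the first one: carefully checking, via the chain and product rules together with the group laws for $\phi_t$ and its horizontal lift, that $\mathcal{L}^{\tilde D}_{\nabla f}R^D\equiv0$ really is equivalent to the flow-invariance identity for $\Psi$. This is the only place where the precise definition of $\mathcal{L}^{\tilde D}$ and the interaction of $\mathrm{d}\phi_t$ with parallel transport actually enter; once that identity is available, the isometry property of metric parallel transport and Theorem~\ref{shrinkage} finish the argument essentially formally. I note that the Bianchi identity $d_{\tilde D}R^D=0$ is not needed above, although via the Cartan formula it shows the hypothesis to be equivalent to $d_{\tilde D}(\imath_{\nabla f}R^D)=0$, a form that could serve as the starting point of an alternative proof.
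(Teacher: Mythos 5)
Your proposal is correct and follows essentially the same route as the paper: promote $\mathcal{L}^{\tilde D}_{\nabla f}R^D=0$ to flow-invariance of $R^D$ under the horizontal lift $\tilde\phi_t$, use that metric parallel transport is a fiberwise isometry to equate norms, and then kill $R^D$ on the dense union of stable manifolds of local minima via the exponential shrinkage of $\mathrm{d}\phi_t$ from Theorem~\ref{shrinkage}. Your write-up is in fact slightly more explicit than the paper's at the two points it glosses over (the derivation of the invariance identity from $\Psi'\equiv 0$, and the compactness bound $|R^D_y(a,b)|\le C|a||b|$ needed before letting $t\to+\infty$).
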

\begin{proof}
   Again we let $\phi_t$ be the negative gradient flow of $f$. This time we denote by $\tilde\phi_t$ the horizontal lift of $\phi_t$ to $\text{End}(E)$, with respect to the connection $\tilde D$. Since the induced connection $\tilde D$ is also a metric connection for the induced metric $\tilde{h}$ on $\text{End}(E)$, the parallel transport $\tilde\phi_t$ preserves $\tilde{h}$. The condition $\mathcal{L}^{\tilde D}_{\nabla f}R^D=0$ implies the invariance of $R^D$ under the flow $\tilde\phi_t$. 

   Now for any local minimum $q$, and all $ x\in W^s(q)$, the invariance of $R^D$ takes the form 
\begin{equation*}
    R^D_x(u_x,v_x)=\tilde\phi_{-t}\Big|_{\phi_t(x)}R^D_{\phi_t(x)}\big(\mathrm{d}\phi_t|_x(u_x),\mathrm{d}\phi_t|_x(v_x) \big)
\end{equation*}    
for $\forall u_x~,v_x\in T_xM$. Taking the norm on both sides, we obtain 
\begin{equation*}
    \big| R^D_x(u_x,v_x) \big|_{\tilde h}=\big| R^D_{\phi_t(x)}\big(\mathrm{d}\phi_t|_x(u_x),\mathrm{d}\phi_t|_x(v_x) \big) \big|_{\tilde h} 
\end{equation*}
. However, as $t \to+\infty$, the right-hand side decays to $0$ due to the shrinkage of the flow $\phi_t$ (cf. {Remark} below {Theorem}~\ref{shrinkage}). Therefore $R^D_x=0$, and since these points $x$ form an dense subset of $M$, we obtain the flatness of $D$.
\end{proof}
 We conclude this note with two remarks. First, the condition $\mathcal{L}^{\tilde D}_{\nabla f}R^D=d_{\tilde D}\imath_{\nabla f}R^D=0$ is weaker than the condition $\imath_{\nabla f}R^D=0$ in {Proposition}~3.4 of \cite{Ni}, owing to the Cartan formula and Bianchi identity above, and the proof is also different. Second, by viewing $\mathrm{d}u$ as a  $u^{-1}TN$ 1-form $\in \Omega^1(M,u^{-1}TN)$, we could get the same vanishing result in {Proposition}~\ref{constant} with weaker condition:
\begin{proposition}
    If for a smooth map $u:M\to N$ and for some Morse function $f$ on $M$, $\mathcal{L}_{\nabla f}^{\tilde \nabla} du=0$, then $u$ must be a constant map. Here $\tilde \nabla$ is the pull-back connection of the Levi-Civita connection on $N$.
\end{proposition}
  The proof is identical to that of {Proposition}~\ref{flatness}, and the condition is indeed weaker for the same reason: Cartan formula and $\mathrm{d}_{\tilde\nabla} \mathrm{d}u=0$, which appears in the proof of  {Theorem}~\ref{shrinkage}.

\bigskip



\begin{thebibliography}{A}



\bibitem{Jo} J. Jost, \textit{Riemannian geometry and geometric analysis}, Universitext, Springer, Berlin, 1995, xii+401 pp.

\bibitem{Lau} F. Laudenbach, \textit{On the Thom-Smale complex}. Ast{\'e}risque. \textbf{205} (1992), 219-233.

\bibitem{Mi} J. Milnor, \textit{Morse theory}. Annals of Mathematics Studies, \textbf{51}, Princeton Univ. Press, Princeton, NJ, 1963, vi+153 pp.

\bibitem{Ni} L. Ni, \textit{A multiplicity estimate for the Jacobi operator of a nonflat Yang-Mills field over $\Bbb{S}^m$}, Geom. Dedicata \textbf {217} (2023), no.~5, Paper No. 92, 12 pp.

\bibitem{Sch} M. Schwarz, \textit{Morse homology}, Progress in Mathematics, \textbf{111}, Birkh\"auser, Basel, 1993, x+235 pp.

\bibitem{Ura} H. Urakawa, \textit{Calculus of variations and harmonic maps}, translated from the 1990 Japanese original by the author, Translations of Mathematical Monographs, \textbf{132}, Amer. Math. Soc., Providence, RI, 1993, xiv+251 pp.

\end{thebibliography}
\end{document}